\newtheorem{theorem}{Theorem}
\theoremstyle{plain}
\newtheorem{conclusion}{Conclusion}
\newtheorem{corollary}{Corollary}
\newtheorem{definition}{Definition}
\newtheorem{example}{Example}
\newtheorem{lemma}{Lemma}
\newtheorem{notation}{Notation}
\newtheorem{proposition}{Proposition}
\numberwithin{equation}{section}
\begin{document}
\title[Semiclassical analysis in B.O. approximation]{Semiclassical analysis
for Hamiltonian in the Born-Oppenheimer approximation}
\author{Senoussaoui Abderrahmane}
\address{Universit\'{e} d'Oran, Facult\'{e} des Sciences Exactes \& Appliqu%
\'{e}es\\
D\'{e}partement de\ Math\'{e}matiques. B.P. 1524 El-Mnaouer, Oran, ALGERIA.}
\email{senoussaoui\_abdou@yahoo.fr, senoussaoui.abderahmane@univ-oran.dz}
\subjclass[2000]{Primary 35J10, 35Q55 ; Secondary 81Q05, 35P15}
\keywords{Discret spectrum, harmonic oscillator, locally compact operator.}

\begin{abstract}
The purpose of this paper is to show that the operator 
\begin{equation*}
H\left( h\right) =-h^{2}\Delta _{x}-\Delta _{y}+V\left( x,y\right) ,
\end{equation*}%
$V$ is continuous (or $V\in L^{2}\left( \mathbb{R}_{x}^{n}\times \mathbb{R}%
_{y}^{p}\right) $), and $V\left( x,y\right) \rightarrow \infty $ as $%
\left\Vert x\right\Vert +\left\Vert y\right\Vert \rightarrow \infty ,$ has
purely discrete spectrum. We give an application to the harmonic oscillator.
\end{abstract}

\maketitle

\section{Introduction}

The Born-Oppenheimer approximation is a method introduced in \cite{BoOp} to
analyse the spectrum of molecules. It consists in studying the behavior of
the associate Hamiltonian when the nuclear mass tends to infinity. This
Hamiltonian can be written in the form:%
\begin{equation*}
H\left( h\right) =-h^{2}\Delta _{x}-\Delta _{y}+V\left( x,y\right)
\end{equation*}%
where $x\in \mathbb{R}^{n}$ represents the position of the nuclei, $y\in 
\mathbb{R}^{p}$ is the position of the electrons, $h$ is proportional to the
inverse of the square-root of the nuclear mass and $V\left( x,y\right) $ is
the interaction potential.

In the last decade, many efforts have been made in order to study in the
semiclassical limit the spectrum of $H\left( h\right) $ ( see e.g. \cite{GMS}%
, \cite{KMSW}, \cite{Ma2}, \cite{MaMe}, \cite{MeSe}, \cite{MSD},...). These
authors have shown that in many situations it is still possible to perform,
by Grushin's method, semiclassical constructions related to the existence of
some hidden effective semiclassical operator.

In this paper, we will study the semiclassical approximation to the
eigenvalues and eigenfunctions of $H\left( h\right) $ for potentials $%
V\left( x,y\right) $ with $\inf_{\left\Vert x\right\Vert +\left\Vert
y\right\Vert >R}V\left( x,y\right) ,$ for some $R>0,$ in particular when $%
\lim_{\left\Vert x\right\Vert +\left\Vert y\right\Vert \rightarrow \infty
}V\left( x,y\right) =\infty .$ Our main result in this sens is to show that
in this case the Hamiltonian $H\left( h\right) $ has a purely discrete
spectrum.$\ $The technique used is based on the so called locally compact
operator. The resolvent $R_{H\left( h\right) }\left( z\right) =\left(
H\left( h\right) -z\right) ^{-1}$, $\func{Im}z\neq 0$, of the operator $%
H\left( h\right) $ on $L^{2}\left( \mathbb{R}_{x}^{n}\times \mathbb{R}%
_{y}^{p}\right) $ is typically not compact (however, it usually is on $%
L^{2}\left( \mathbb{X}\right) $, when $\mathbb{X\subset }$ $\mathbb{R}%
_{x}^{n}\times \mathbb{R}_{y}^{p}$ is compact). If $R_{H\left( h\right)
}\left( z\right) $ is compact, then the spectrum $\sigma \left( R_{H\left(
h\right) }\left( z\right) \right) $ is discrete with zero the only possible
point in the essential spectrum. Hence, one would expect that $H\left(
h\right) $ has discrete spectrum with the only possible accumulation point
at infinity (i.e., the essential spectrum $\sigma _{ess}\left( H\left(
h\right) \right) =\emptyset $). In this way, the spectrum $\sigma \left(
H\left( h\right) \right) $ reflects the compactness of $R_{H\left( h\right)
}\left( z\right) $. It turns out that these properties are basically
preserved if, instead of $R_{H\left( h\right) }\left( z\right) $ being
compact, it is compact only when restricted to any compact subset of $%
\mathbb{R}_{x}^{n}\times \mathbb{R}_{y}^{p}$. This is the notion of local
compactness. From an analysis of this notion we will see that the discrete
spectrum of $H\left( h\right) $ is determined by the behavior of $H\left(
h\right) $ on bounded subsets of $\mathbb{R}_{x}^{n}\times \mathbb{R}%
_{y}^{p} $ and the essential spectrum of $H\left( h\right) $ is determined
by the behavior of $V\left( x,y\right) $ in a neighborhood of infinity.

We introduce a specific family of sequences, called Zhislin sequences, which
will allow us to characterize the cress of locally compact, self-adjoint
operators, representing Weyl sequences for a self-adjoint operator.

We finish our work by an application to calculate the spectrum of the
harmonic oscillator of semiclassical Schr\"{o}dinger operator and of the
Hamiltonian in the Born-Oppenheimer approximation $H\left( h\right) .$

\section{Preliminaries}

Let recall some basic definitions on the spectrum of unbounded operator on
Hilbert space.

\begin{definition}
Let $A$ be a linear operator on a Hilbert space $X$ with domain $D\left(
A\right) \subset X$.

\begin{enumerate}
\item The spectrum of $A,$ $\sigma \left( A\right) $, is the set of all
points $\lambda \in \mathbb{C}$ for which $A$ $-\lambda $ ($A$ $-\lambda I,$ 
$I$ is the identity$)$ is not invertible.

\item The resolvent set of $A$, $\rho \left( A\right) $, is the set of all
points $\lambda \in \mathbb{C}$ for which $A$ $-\lambda $ is invertible.

\item If $\lambda \in \rho \left( A\right) $, then the inverse of $A-\lambda 
$ is called the resolvent of $A$ at $\lambda $ and is written as $R_{\lambda
}\left( A\right) =\left( A-\lambda \right) ^{-1}.$
\end{enumerate}
\end{definition}

Let us note that by definition, $\rho \left( A\right) =\mathbb{C}\backslash
\sigma \left( A\right) .$

We can classify $\sigma \left( A\right) $ as:

\begin{definition}
Let $A$ be a linear operator on a Hilbert space $X$ with domain $D\left(
A\right) \subset X$.

\begin{enumerate}
\item If $\lambda \in $ $\sigma \left( A\right) $ is such that $\ker
(A-\lambda )$ $\neq $ $\{0\}$, then $\lambda $ is an eigenvalue of $A$ and
any $u\in $ $\ker (A-\lambda )$, $u\neq 0$, is an eigenvector of $A$ for $%
\lambda $ and satisfies $Au=\lambda u$. Moreover, $\dim \ker (A-\lambda )$
is called the (geometric) multiplicity of $\lambda $ and $\ker (A-\lambda )$
is the (geometric) eigenspace of $A$ at $\lambda $.

\item The discrete spectrum of $A$, $\sigma _{disc}(A)$, is the set of all
eigenvalues of $A$ with finite (algebraic) multiplicity and which are
isolated points of $\sigma (A)$.

\item The essential spectrum of $A$ is defined as the complement of $\sigma
_{disc}(A)$ in $\sigma (A)$: $\sigma _{ess}(A)=\sigma (A)\backslash $ $%
\sigma _{disc}(A)$.
\end{enumerate}
\end{definition}

Let $h\in \left] 0,h_{0}\right] ,$ $h_{0}>0,$ a small semiclassical
parameter.

\begin{theorem}
The spectrum of the self-adjoint operator $-h^{2}\Delta _{x}-$ $\Delta _{y}$
on $H^{2}\left( \mathbb{R}_{x}^{n}\times \mathbb{R}_{y}^{p}\right) $ is%
\begin{equation*}
\sigma \left( -h^{2}\Delta _{x}-\Delta _{y}\right) =\sigma _{ess}\left(
-h^{2}\Delta _{x}-\Delta _{y}\right) =\left[ 0,+\infty \right[ ,\text{ for
all }h\in \left] 0,h_{0}\right] .
\end{equation*}
\end{theorem}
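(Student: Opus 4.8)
The plan is to diagonalize the constant-coefficient operator $-h^{2}\Delta _{x}-\Delta _{y}$ by the Fourier transform and then read off the spectrum from the resulting multiplication operator. First I would let $\mathcal{F}$ denote the unitary Fourier transform on $L^{2}\left( \mathbb{R}_{x}^{n}\times \mathbb{R}_{y}^{p}\right) $, sending the variables $\left( x,y\right) $ to the dual variables $\left( \xi ,\eta \right) \in \mathbb{R}^{n}\times \mathbb{R}^{p}$. Since $\mathcal{F}\left( -\partial _{x_{j}}^{2}\right) \mathcal{F}^{-1}$ is multiplication by $\xi _{j}^{2}$ and likewise for the $y$-derivatives, conjugation by $\mathcal{F}$ transforms $-h^{2}\Delta _{x}-\Delta _{y}$ into the operator of multiplication by the symbol
\begin{equation*}
p\left( \xi ,\eta \right) =h^{2}\lVert \xi \rVert ^{2}+\lVert \eta \rVert ^{2},
\end{equation*}
acting on its maximal domain in $L^{2}\left( \mathbb{R}^{n+p}\right) $. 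Because $\mathcal{F}$ is unitary, the two operators are unitarily equivalent and hence share the same spectrum, the same essential spectrum, and the same point spectrum.

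Next I would invoke the standard spectral description of a multiplication operator $M_{p}\colon u\mapsto p\,u$ by a real-valued, continuous (hence locally bounded and measurable) function $p$: its spectrum equals the essential range of $p$, which for a continuous function coincides with the closure of its range. Since $p\left( \xi ,\eta \right) =h^{2}\lVert \xi \rVert ^{2}+\lVert \eta \rVert ^{2}$ takes every value in $\left[ 0,+\infty \right[ $ and nothing else, its range is already closed, so
\begin{equation*}
\sigma \left( -h^{2}\Delta _{x}-\Delta _{y}\right) =\sigma \left( M_{p}\right) =\overline{\operatorname{ran}p}=\left[ 0,+\infty \right[ ,
\end{equation*}
independently of the value of $h\in \left] 0,h_{0}\right] $.

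It then remains to show that all of this spectrum is essential, i.e. that $\sigma _{disc}=\emptyset $. The key observation is that $M_{p}$ has no eigenvalues at all. Indeed, if $M_{p}u=\lambda u$ for some $u\in L^{2}$, then $\left( p\left( \xi ,\eta \right) -\lambda \right) u\left( \xi ,\eta \right) =0$ almost everywhere, so $u$ must vanish off the level set $\{\left( \xi ,\eta \right) :h^{2}\lVert \xi \rVert ^{2}+\lVert \eta \rVert ^{2}=\lambda \}$. For each $\lambda \geq 0$ this level set is a sphere (ellipsoid, or the single point $\{0\}$ when $\lambda =0$) in $\mathbb{R}^{n+p}$, which has Lebesgue measure zero; hence $u=0$ in $L^{2}$, and $\lambda $ is not an eigenvalue. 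Consequently the point spectrum is empty, a fortiori there are no isolated eigenvalues of finite multiplicity, so $\sigma _{disc}\left( -h^{2}\Delta _{x}-\Delta _{y}\right) =\emptyset $ and therefore $\sigma _{ess}=\sigma \backslash \sigma _{disc}=\left[ 0,+\infty \right[ $, as claimed.

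The step I expect to be the main (though modest) obstacle is the rigorous identification of $\sigma \left( M_{p}\right) $ with the essential range of an \emph{unbounded} multiplier: one must work carefully on the natural domain $\{u:p\,u\in L^{2}\}$ and justify that a value $\lambda $ lies in the resolvent set precisely when $\left( p-\lambda \right) ^{-1}$ is an essentially bounded multiplier, which fails exactly when $\lambda \in \overline{\operatorname{ran}p}$. Alternatively, one could bypass the abstract multiplication-operator theory and instead construct, for each fixed $\lambda \geq 0$, an explicit singular Weyl sequence --- for instance normalized, widely separated translates of a fixed Schwartz bump modulated by a plane wave $e^{i\left( \xi _{0}\cdot x+\eta _{0}\cdot y\right) }$ with $h^{2}\lVert \xi _{0}\rVert ^{2}+\lVert \eta _{0}\rVert ^{2}=\lambda $ --- which converges weakly to zero while $\lVert \left( -h^{2}\Delta _{x}-\Delta _{y}-\lambda \right) u_{k}\rVert \rightarrow 0$, directly exhibiting $\lambda \in \sigma _{ess}$.
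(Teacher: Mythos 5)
Your proof is correct. The paper offers no argument of its own for this theorem --- its ``proof'' is only a pointer to the references of Hislop--Sigal and Reed--Simon --- and the route you take (conjugating by the Fourier transform to the multiplication operator with symbol $h^{2}\lVert \xi \rVert ^{2}+\lVert \eta \rVert ^{2}$, identifying the spectrum with the closure of the range, and ruling out eigenvalues because the level sets are Lebesgue-null ellipsoids, so that $\sigma_{disc}=\emptyset$ and all of $\left[0,+\infty\right[$ is essential) is exactly the standard argument in those references; you have simply supplied the details the paper omits.
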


\begin{proof}
The proof is similar as in \cite{HiSi,ReSi}
\end{proof}

Let $V\in L_{loc}^{2}\left( \mathbb{R}_{x}^{n}\times \mathbb{R}%
_{y}^{p}\right) $ and be real. We define $H\left( h\right) =-h^{2}\Delta
_{x}-\Delta _{y}+V\left( x,y\right) $ on $D\left( -h^{2}\Delta _{x}-\Delta
_{y}\right) \cap D\left( V\right) ,$ where $D\left( -h^{2}\Delta _{x}-\Delta
_{y}\right) =$ $H^{2}\left( \mathbb{R}_{x}^{n}\times \mathbb{R}%
_{y}^{p}\right) $ and :%
\begin{equation*}
D\left( V\right) =\left\{ \varphi \in L^{2}\left( \mathbb{R}_{x}^{n}\times 
\mathbb{R}_{y}^{p}\right) ;\text{ }\dint \left\vert V\varphi \right\vert
^{2}dxdy<+\infty \right\} .
\end{equation*}%
Note that $C_{0}^{\infty }\left( 
%TCIMACRO{\U{211d} }%
%BeginExpansion
\mathbb{R}
%EndExpansion
_{x}^{n}\times 
%TCIMACRO{\U{211d} }%
%BeginExpansion
\mathbb{R}
%EndExpansion
_{y}^{p}\right) \subset D\left( H\left( h\right) \right) ,$ so $H\left(
h\right) $ is densely defined. The Hamiltonian in the Born-Oppenheimer
approximation is symmetric on this domain:%
\begin{equation*}
\left\langle H\left( h\right) \varphi ,\psi \right\rangle _{L^{2}\left( 
\mathbb{R}_{x}^{n}\times \mathbb{R}_{y}^{p}\right) }=\left\langle \varphi
,H\left( h\right) \psi \right\rangle _{L^{2}\left( \mathbb{R}_{x}^{n}\times 
\mathbb{R}_{y}^{p}\right) },\text{ }\forall \varphi ,\psi \in C_{0}^{\infty
}\left( \mathbb{R}_{x}^{n}\times \mathbb{R}_{y}^{p}\right) ,\text{ }\forall
h\in \left] 0,h_{0}\right] .
\end{equation*}%
Hence, we have that $D\left( H\left( h\right) \right) \subset D\left(
H^{\ast }\left( h\right) \right) .$ Moreover, if $V\geq 0,$ then $H\left(
h\right) \geq 0$ as%
\begin{equation*}
\left\langle H\left( h\right) \varphi ,\varphi \right\rangle _{L^{2}\left( 
\mathbb{R}_{x}^{n}\times \mathbb{R}_{y}^{p}\right) }=\left\Vert h\nabla
_{x}\varphi \right\Vert ^{2}+\left\Vert \nabla _{y}\varphi \right\Vert
^{2}+\left\langle V\varphi ,\varphi \right\rangle _{L^{2}\left( \mathbb{R}%
_{x}^{n}\times \mathbb{R}_{y}^{p}\right) }\geq 0
\end{equation*}%
for any $\varphi \in D\left( H\right) ,$ $\forall h\in \left] 0,h_{0}\right]
.$

\begin{theorem}
Let $V\in L_{loc}^{2}\left( \mathbb{R}_{x}^{n}\times \mathbb{R}%
_{y}^{p}\right) $ et $V\geq 0$. Then the operator $H\left( h\right) $ is
essentially self-adjoint on $C_{0}^{\infty }\left( \mathbb{R}_{x}^{n}\times 
\mathbb{R}_{y}^{p}\right) $, for all $h\in \left] 0,h_{0}\right] $.
\end{theorem}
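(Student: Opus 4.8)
The plan is to use the standard criterion that a densely defined symmetric operator which is bounded below---here $H\left( h\right) \geq 0$---is essentially self-adjoint if and only if $\operatorname{Ran}\left( H\left( h\right) +1\right) $ is dense, equivalently $\ker \left( H\left( h\right) ^{\ast }+1\right) =\left\{ 0\right\} $. So I would fix $h\in \left] 0,h_{0}\right] $ and suppose $u\in L^{2}\left( \mathbb{R}_{x}^{n}\times \mathbb{R}_{y}^{p}\right) $ satisfies $\left( H\left( h\right) ^{\ast }+1\right) u=0$, which means in the distributional sense
\begin{equation*}
-h^{2}\Delta _{x}u-\Delta _{y}u+Vu+u=0 .
\end{equation*}
The goal is to force $u=0$. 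Since $V\in L_{loc}^{2}$ and $u\in L^{2}$, the Cauchy--Schwarz inequality gives $Vu\in L_{loc}^{1}$, hence $\left( h^{2}\Delta _{x}+\Delta _{y}\right) u=\left( V+1\right) u\in L_{loc}^{1}$; in particular the distributional Laplacian of $u$ is a locally integrable function.

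The main tool is Kato's distributional inequality. After the rescaling $x=hx^{\prime }$ the operator $h^{2}\Delta _{x}+\Delta _{y}$ becomes the ordinary Laplacian $\Delta _{\left( x^{\prime },y\right) }$ on $\mathbb{R}^{n+p}$ (the $L^{2}$-norm only changes by the constant Jacobian factor $h^{n}$, which is irrelevant to triviality of the kernel), so I may apply Kato's inequality in its classical form: for $w\in L_{loc}^{1}$ with $\Delta w\in L_{loc}^{1}$ one has $\Delta \left\vert w\right\vert \geq \operatorname{Re}\left( \overline{\operatorname{sgn}w}\,\Delta w\right) $ in $\mathcal{D}^{\prime }$. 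Applying this to $u$, and using that $V+1$ is real and $\overline{\operatorname{sgn}u}\,u=\left\vert u\right\vert $, yields
\begin{equation*}
\left( h^{2}\Delta _{x}+\Delta _{y}\right) \left\vert u\right\vert \geq \left( V+1\right) \left\vert u\right\vert \geq \left\vert u\right\vert ,
\end{equation*}
the last step using $V\geq 0$. Writing $L=h^{2}\Delta _{x}+\Delta _{y}$ and $f=\left\vert u\right\vert \geq 0$, this says $\left( -L+1\right) f\leq 0$ as a distribution, i.e. $\left( -L+1\right) f$ is a nonpositive Radon measure, with $f\in L^{2}$.

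To finish I would regularize. Let $f_{\varepsilon }=j_{\varepsilon }\ast f$ with $j_{\varepsilon }\geq 0$ a standard mollifier; then $f_{\varepsilon }\in C^{\infty }\cap H^{2}$, $f_{\varepsilon }\geq 0$, $f_{\varepsilon }\rightarrow f$ in $L^{2}$, and $\left( -L+1\right) f_{\varepsilon }=j_{\varepsilon }\ast \left( \left( -L+1\right) f\right) \leq 0$ pointwise, since convolving a nonpositive measure with a nonnegative kernel gives a nonpositive smooth function. Because $f_{\varepsilon }\in H^{2}$, integration by parts gives
\begin{equation*}
0\geq \left\langle f_{\varepsilon },\left( -L+1\right) f_{\varepsilon }\right\rangle =h^{2}\left\Vert \nabla _{x}f_{\varepsilon }\right\Vert ^{2}+\left\Vert \nabla _{y}f_{\varepsilon }\right\Vert ^{2}+\left\Vert f_{\varepsilon }\right\Vert ^{2}\geq \left\Vert f_{\varepsilon }\right\Vert ^{2},
\end{equation*}
so $f_{\varepsilon }=0$ for every $\varepsilon $, and letting $\varepsilon \rightarrow 0$ gives $f=\left\vert u\right\vert =0$, hence $u=0$. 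This yields $\ker \left( H\left( h\right) ^{\ast }+1\right) =\left\{ 0\right\} $ and the essential self-adjointness, for every $h\in \left] 0,h_{0}\right] $. The step I expect to be the genuine obstacle is Kato's inequality itself together with the care needed to make the distributional manipulations rigorous: verifying that $\Delta u$ is a locally integrable function so that the inequality applies, that $\overline{\operatorname{sgn}u}\,\Delta u$ is evaluated correctly, and that convolving the nonpositive measure $\left( -L+1\right) f$ with $j_{\varepsilon }$ indeed produces a pointwise nonpositive function that legitimizes the sign in the final integration by parts. The reduction and the regularization are routine once Kato's inequality is in hand.
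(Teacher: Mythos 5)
Your proof is correct: it is precisely the standard Kato-inequality argument (reduction to $\ker(H(h)^{\ast}+1)=\{0\}$, the distributional inequality $\Delta|u|\geq \operatorname{Re}(\overline{\operatorname{sgn}u}\,\Delta u)$, mollification, and integration by parts), which is exactly the proof given in the references the paper cites for this theorem (Reed--Simon, Theorem X.28; Hislop--Sigal), the paper itself offering no proof beyond that citation. The only cosmetic remark is that the rescaling $x=hx'$ and the retention of $h^{2}\|\nabla_{x}f_{\varepsilon}\|^{2}$ in the final identity are slightly redundant with each other, since Kato's inequality applies directly to the anisotropic operator $h^{2}\Delta_{x}+\Delta_{y}$; this does not affect the validity of the argument.
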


\begin{proof}
See \cite[Theorem 7.6, page 73 ]{HiSi}, \cite{ReSi}
\end{proof}

\section{Locally compact operators and their application to the
Born-Oppenheimer operator}

\begin{definition}
Let $A$ be a closed operator on $L^{2}(\mathbb{R}^{n})$ with $\rho (A)\neq
\emptyset ,$ let $\chi _{B}$ be the characteristic function for a set $%
B\subset \mathbb{R}^{n}.$ Then $A$ is locally compact if for each bounded
set $B,$ $\chi _{B}\left( A-\lambda \right) ^{-1}$ is compact for some (and
hence all) $\lambda \in \rho (A)$.
\end{definition}

\begin{example}
\label{exemple}

\begin{description}
\item[1.] $\Delta $ is locally compact on $L^{2}(\mathbb{R}^{3})$. Note that 
$\chi _{B}\left( 1-\Delta \right) ^{-1}$ has kernel 
\begin{equation*}
\chi _{B}\left( x\right) \left[ 4\pi \left\Vert x-y\right\Vert \right]
^{-1}e^{-\left\Vert x-y\right\Vert }\text{,}
\end{equation*}%
witch belong to $L^{2}(\mathbb{R}^{3}\times \mathbb{R}^{3}).$ By
Hilbert-Schmidt theorem \cite{HiSi,ReSi,Si}, $\chi _{B}\left( 1-\Delta
\right) ^{-1}$ is compact. We mentionthat the same compactness result holds
in $n$ dimension (see \cite{ReSi}).

\item[2.] $\left( -\Delta \right) ^{\frac{1}{2}},$ the positive square root
of $\left( -\Delta \right) \geq 0$ is locally compact. Indeed, note that it
suffices to show that $A^{\ast }=\chi _{B}\left( i+\left( -\Delta \right) ^{%
\frac{1}{2}}\right) ^{-1}$ is compact. As $A=\left( -i+\left( -\Delta
\right) ^{\frac{1}{2}}\right) ^{-1}\chi _{B},$ we have%
\begin{equation*}
A^{\ast }A=\chi _{B}\left( 1-\Delta \right) ^{-1}\chi _{B},
\end{equation*}%
and by (1) above, $A^{\ast }A$ is compact. Now we claim that this implies
that $A$ is compact, for if $u_{n}\overset{w}{\rightarrow }0$ (weakly
convergence)$,$%
\begin{equation*}
\left\Vert Au_{n}\right\Vert ^{2}=\left\langle u_{n},A^{\ast
}Au_{n}\right\rangle \leq \left\Vert u_{n}\right\Vert \left\Vert A^{\ast
}Au_{n}\right\Vert ,
\end{equation*}%
and as the sequence $\left( u_{n}\right) _{n}$ is uniformly bounded and $%
A^{\ast }Au_{n}\overset{s}{\rightarrow }0$ (strongly convergence)$,$ we have 
$Au_{n}\overset{s}{\rightarrow }0.$ Hence, $A$ is compact.
\end{description}
\end{example}

We now show that certain classes of Hamiltonian operators $H\left( h\right)
=-h^{2}\Delta _{x}-\Delta _{y}+V\left( x,y\right) $ are locally compact.

\begin{theorem}
\label{Theorem1}Let $V$ be continuous (or $V\in L_{loc}^{2}\left( \mathbb{R}%
_{x}^{n}\times \mathbb{R}_{y}^{p}\right) $), $V\geq 0,$ and $V\rightarrow
+\infty $ as $\left\Vert x\right\Vert +\left\Vert y\right\Vert \rightarrow
\infty $ . Then $H\left( h\right) =-h^{2}\Delta _{x}-\Delta _{y}+V\left(
x,y\right) $ is locally compact, for every $h\in \left] 0,h_{0}\right] .$
\end{theorem}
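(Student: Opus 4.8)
The plan is to fix the convenient spectral point $\lambda=-1$ and to prove that $\chi_B\left(H(h)+1\right)^{-1}$ is compact for every bounded set $B\subset\mathbb{R}_x^n\times\mathbb{R}_y^p$; by the definition of local compactness this is exactly what is required. First I would record the setup: since $H(h)$ is essentially self-adjoint on $C_0^\infty$ (as established above) and $H(h)\geq 0$, its closure is self-adjoint with $\sigma(H(h))\subset[0,+\infty[$. Hence $-1\in\rho(H(h))$ and $(H(h)+1)^{-1}$ is a bounded self-adjoint operator on $L^2(\mathbb{R}_x^n\times\mathbb{R}_y^p)$ with norm $\leq 1$. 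Note that the hypothesis $V\to+\infty$ is not actually needed for local compactness (it will be used later, for the discreteness of the spectrum); only $V\geq 0$ and $V\in L^2_{loc}$ will enter here, since local compactness is strictly weaker than compactness of the full resolvent.

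The core of the argument is an elliptic energy estimate showing that $(H(h)+1)^{-1}$ maps $L^2$ boundedly into $H^1$. Given $f\in L^2$, set $u=(H(h)+1)^{-1}f\in D(H(h))$. Since $D(H(h))$ lies in the form domain and $V\geq 0$, pairing $(H(h)+1)u=f$ with $u$ and integrating by parts gives
\begin{equation*}
h^2\|\nabla_x u\|^2+\|\nabla_y u\|^2+\langle Vu,u\rangle+\|u\|^2=\langle f,u\rangle\leq\|f\|\,\|u\|\leq\|f\|^2,
\end{equation*}
where the last inequality uses $\|u\|\leq\|f\|$. Dropping the nonnegative term $\langle Vu,u\rangle$ and recalling that $h\in\left]0,h_0\right]$ is fixed, I obtain $\|u\|^2+\|\nabla_x u\|^2+\|\nabla_y u\|^2\leq(2+h^{-2})\|f\|^2$, i.e. $(H(h)+1)^{-1}\colon L^2\to H^1(\mathbb{R}_x^n\times\mathbb{R}_y^p)$ is bounded. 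The constant degenerates as $h\to 0$, but this is harmless since $h$ is held fixed.

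It then remains to compose with multiplication by $\chi_B$. Because $B$ is bounded, I choose a bounded open neighborhood $\Omega\supset B$; the map $u\mapsto\chi_B u$ factors through the restriction $H^1(\mathbb{R}_x^n\times\mathbb{R}_y^p)\to H^1(\Omega)$ followed by the Rellich--Kondrachov compact embedding $H^1(\Omega)\hookrightarrow L^2(\Omega)$, so $\chi_B\colon H^1\to L^2$ is compact. Composing the bounded operator $(H(h)+1)^{-1}\colon L^2\to H^1$ with the compact operator $\chi_B\colon H^1\to L^2$ shows that $\chi_B(H(h)+1)^{-1}$ is compact, which is the assertion.

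The one step I expect to require genuine care is the rigorous justification of the form identity above for $u\in D(H(h))$ when $V$ is merely in $L^2_{loc}$ and possibly unbounded: one should first verify it on the core $C_0^\infty$, where it is immediate, and then pass to the closure, using that the closed quadratic form $u\mapsto h^2\|\nabla_x u\|^2+\|\nabla_y u\|^2+\int V|u|^2$ represents $H(h)$. Everything else is standard. As an alternative route one could instead invoke the local compactness of $H_0(h)=-h^2\Delta_x-\Delta_y$, which follows from Example \ref{exemple} in $n+p$ dimensions, together with the operator inequality $0\leq(H(h)+1)^{-1}\leq(H_0(h)+1)^{-1}$ coming from $V\geq 0$, and the fact that a positive operator dominated by a compact positive one is itself compact (the same weak-convergence device used in part \textbf{2.} of Example \ref{exemple}); but the direct estimate above seems shortest.
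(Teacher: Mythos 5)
Your proof is correct, but it reaches the conclusion by a genuinely different route than the paper. Both arguments hinge on the same energy estimate $h^{2}\left\Vert \nabla _{x}u\right\Vert ^{2}+\left\Vert \nabla _{y}u\right\Vert ^{2}\leq \left\langle u,\left( H\left( h\right) +1\right) u\right\rangle $, but the paper packages it as the statement that $\left( 1+\left( -h^{2}\Delta _{x}-\Delta _{y}\right) ^{1/2}\right) \left( 1+H\left( h\right) \right) ^{-1/2}$ is bounded, and then factors $\chi _{B}\left( 1+H\left( h\right) \right) ^{-1/2}$ as the compact operator $\chi _{B}\left( 1+\left( -h^{2}\Delta _{x}-\Delta _{y}\right) ^{1/2}\right) ^{-1}$ (compactness supplied by Example \ref{exemple}, i.e. ultimately by the explicit Hilbert--Schmidt kernel of $\chi _{B}\left( 1-\Delta \right) ^{-1}$) times that bounded operator, and finally squares the half-power of the resolvent. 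You instead read the same estimate as boundedness of $\left( H\left( h\right) +1\right) ^{-1}:L^{2}\rightarrow H^{1}$ and obtain compactness of $\chi _{B}:H^{1}\rightarrow L^{2}$ from Rellich--Kondrachov. The two proofs are equivalent in substance --- both reduce to the relative compactness of $\chi _{B}$ with respect to the free operator --- but yours avoids fractional powers of $H\left( h\right) $ and the kernel computation, at the price of invoking the compact Sobolev embedding (take $\Omega $ to be a ball so that $H^{1}\left( \Omega \right) \hookrightarrow L^{2}\left( \Omega \right) $ is indeed compact). Your remarks that the hypothesis $V\rightarrow +\infty $ is not used at this stage, and that the form identity must be justified on the core $C_{0}^{\infty }$ before passing to the closure when $V\in L_{loc}^{2}$, are both accurate and consistent with what the paper's proof actually uses.

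One caution about your closing aside: the inequality $0\leq \left( H\left( h\right) +1\right) ^{-1}\leq \left( H_{0}\left( h\right) +1\right) ^{-1}$ cannot be fed directly into the domination lemma, since $\left( H_{0}\left( h\right) +1\right) ^{-1}$ is not compact on $L^{2}\left( \mathbb{R}_{x}^{n}\times \mathbb{R}_{y}^{p}\right) $ and $\chi _{B}\left( H\left( h\right) +1\right) ^{-1}$ is not a positive operator. One must first sandwich, $\chi _{B}\left( H\left( h\right) +1\right) ^{-1}\chi _{B}\leq \chi _{B}\left( H_{0}\left( h\right) +1\right) ^{-1}\chi _{B}$, conclude compactness of the left side, recognize it as $CC^{\ast }$ with $C=\chi _{B}\left( H\left( h\right) +1\right) ^{-1/2}$, and then recover compactness of $C$ and hence of $\chi _{B}\left( H\left( h\right) +1\right) ^{-1}$. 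Since your main argument does not rely on this aside, it does not affect the validity of the proof.
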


\begin{proof}
Note that $H\left( h\right) $ is self-adjoint by the Kato inequality \cite%
{Ka}, and $H\left( h\right) \geq 0,$ $\forall h\in \left] 0,h_{0}\right] .$
We first make the following claim: 
\begin{equation*}
\left( -h^{2}\Delta _{x}-\Delta _{y}\right) ^{1/2}\text{ is }H^{1/2}\left(
h\right) \text{-bounded and }\left( -h^{2}\Delta _{x}-\Delta _{y}+1\right)
^{1/2}\text{ is }\left( H\left( h\right) +1\right) ^{1/2}\text{-bounded. }
\end{equation*}%
Indeed, since $-h^{2}\Delta _{x}-\Delta _{y}\geq 0$ and $H\left( h\right)
\geq 0,$ all the operators $\left( -h^{2}\Delta _{x}-\Delta _{y}\right)
^{1/2},$ $H^{1/2}\left( h\right) $ and $\left( H\left( h\right) +1\right)
^{1/2}$ are well defined. We have a simple estimate for any $u\in
C_{0}^{\infty }\left( \mathbb{R}_{x}^{n}\times \mathbb{R}_{y}^{p}\right) ,$%
\begin{eqnarray}
\left\Vert \left( -h^{2}\Delta _{x}-\Delta _{y}\right) ^{\frac{1}{2}%
}u\right\Vert ^{2} &=&\left\langle u,\left( -h^{2}\Delta _{x}-\Delta
_{y}\right) u\right\rangle \leq \left\langle u,H\left( h\right)
u\right\rangle \leq \left\langle u,\left( H\left( h\right) +1\right)
u\right\rangle  \notag \\
&\leq &\left\Vert \left( H\left( h\right) +1\right) u\right\Vert ^{2}.
\label{2.1}
\end{eqnarray}%
This estimate extends to all $u\in D\left( H^{1/2}\left( h\right) \right) .$
Consequently, equation $\left( \ref{2.1}\right) $ shows that $\left(
-h^{2}\Delta _{x}-\Delta _{y}\right) ^{\frac{1}{2}}$ is $H^{1/2}\left(
h\right) $-bounded. Also, as we have 
\begin{equation*}
\left\langle u,H\left( h\right) u\right\rangle \leq \left\Vert H^{1/2}\left(
h\right) u\right\Vert ^{2},
\end{equation*}%
which follows from the Schwarz inequality, it follows from this and the
third term of $\left( \ref{2.1}\right) $that $\left( -h^{2}\Delta
_{x}-\Delta _{y}\right) ^{\frac{1}{2}}$ is $H^{1/2}\left( h\right) $
-bounded.

We have%
\begin{gather}
\chi _{B}\left( 1+H\left( h\right) \right) ^{-1/2}=  \notag \\
\chi _{B}\left( 1+H\left( h\right) \right) ^{-1}\left( 1+\left( -h^{2}\Delta
_{x}-\Delta _{y}\right) ^{1/2}\right) ^{-1}\left( 1+\left( -h^{2}\Delta
_{x}-\Delta _{y}\right) ^{1/2}\right) \left( 1+H\left( h\right) \right)
^{-1/2}  \label{2.2}
\end{gather}%
and by example \ref{exemple} (2), the first factor on the right in $\left( %
\ref{2.2}\right) $ is compact, the second is bounded, and so $\chi
_{B}\left( 1+H\right) ^{-1/2}$ is compact. To prove the theorem, simply write%
\begin{equation*}
\chi _{B}\left( 1+H\left( h\right) \right) ^{-1}=\chi _{B}\left( 1+H\left(
h\right) \right) ^{-\frac{1}{2}}\left( 1+H\left( h\right) \right) ^{-\frac{1%
}{2}},
\end{equation*}%
and observe that the right side is product of a compact and a bounded
operator and is hence compact.
\end{proof}

\subsection{Spectral properties of locally compact operators}

We introduce a specific family of sequences, called Zhislin sequences \cite%
{Zh}, which will allow us to characterize the essential spectrum $\sigma
_{ess}$ of locally compact, self-adjoint operators.

\begin{definition}
Let \ $B_{k}=\left\{ x\in \mathbb{R}^{n}:\left\Vert x\right\Vert \leq k,k\in 
%TCIMACRO{\U{2115} }%
%BeginExpansion
\mathbb{N}
%EndExpansion
\right\} .$ A sequence $\left( u_{n}\right) _{n}$ is a Zhislin for a closed
operator $A$ and $\lambda \in 
%TCIMACRO{\U{2102} }%
%BeginExpansion
\mathbb{C}
%EndExpansion
$ if $u_{n}\in D(A),$%
\begin{equation*}
\left\Vert u_{n}\right\Vert =1,\mathrm{supp}u_{n}\subset \left\{ x;x\in 
\mathbb{R}^{n}\diagdown B_{n}\right\} \text{ and }\left\Vert \left(
A-\lambda \right) u_{n}\right\Vert \rightarrow 0\text{ as }n\rightarrow
\infty .
\end{equation*}
\end{definition}

By Weyl's criterion \cite{ReSi}, it is clear that if $A$ is self-adjoint and
there exists a Zhislin sequence for $A$ and $\lambda $, then $\lambda \in
\sigma _{ess}\left( A\right) $.

\begin{definition}
Let $A$ be a closed operator. The set of all $\lambda \in \mathbb{C}$ such
that there exists a Zhislin sequence for $A$ and $\lambda $ is called the
Zhislin spectrum of $A$, which we denote by $Z\left( A\right) .$
\end{definition}

\begin{notation}
The commutator of two linear operators $A$ and $B$ is defined formally by $%
\left[ A,B\right] $ $=$ $AB-BA$.
\end{notation}

Let $B(x,R)$ denote the ball of radius $R$ centered at the point $x$. Our
main theorem states that the essential spectrum is equal to the Zhislin
spectrum of a self-adjoint, locally compact operator that is also local in
the sense of $\left( \ref{2.3}\right) $ ahead.

\begin{theorem}
\label{Theorem2}Let $A$ be a self-adjoint and locally compact operator on $%
L^{2}\left( \mathbb{R}^{n}\right) $. Suppose that $A$ also satisfies%
\begin{equation}
\left\Vert \left[ A,\phi _{n}\left( x\right) \right] \left( A-i\right)
^{-1}\right\Vert \rightarrow 0,\text{ as }n\rightarrow \infty  \label{2.3}
\end{equation}%
where $\phi _{n}\left( x\right) =\phi \left( x/n\right) $ for some $\phi \in
C_{0}^{\infty }\left( \mathbb{R}^{n}\right) ,$ supp$\phi \subset $\ $B\left(
0,2\right) ,$ $\phi \geq 0$ and $\phi _{\left\vert B\left( 0,1\right)
\right. }=1$. Then $\sigma _{ess}\left( A\right) =Z\left( A\right) $.
\end{theorem}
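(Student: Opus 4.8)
The inclusion $Z(A)\subseteq\sigma_{ess}(A)$ is precisely the remark following the definition of the Zhislin spectrum: the normalized terms of a Zhislin sequence have supports escaping to infinity, hence converge weakly to $0$, so the sequence is a singular Weyl sequence and Weyl's criterion places $\lambda$ in $\sigma_{ess}(A)$. The whole task is therefore the reverse inclusion $\sigma_{ess}(A)\subseteq Z(A)$, i.e.\ to convert an arbitrary singular Weyl sequence into one whose supports march off to infinity. The plan is to fix $\lambda\in\sigma_{ess}(A)$ and, by Weyl's criterion, a sequence $(v_m)\subset D(A)$ with $\|v_m\|=1$, $v_m\rightharpoonup 0$ weakly and $\|(A-\lambda)v_m\|\to 0$, and then to build $u_n=(1-\phi_{k_n})v_{m_n}/\|(1-\phi_{k_n})v_{m_n}\|$ for suitable indices $k_n,m_n\to\infty$. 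Since $1-\phi_k$ vanishes on $B_k$, one has $\mathrm{supp}\,u_n\subset\mathbb{R}^n\setminus B_{k_n}\subseteq\mathbb{R}^n\setminus B_n$ as soon as $k_n>n$, which secures the support condition.

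Two ingredients drive the construction. First, local compactness: for each fixed $k$ the multiplier $\phi_k$ is supported in $B(0,2k)$, so $\phi_k(A-i)^{-1}=\phi_k\,\chi_{B(0,2k)}(A-i)^{-1}$ is compact (bounded times compact). Because $v_m\rightharpoonup 0$ forces $(A-i)^{-1}v_m\rightharpoonup 0$, this compact operator sends the sequence to a norm-null one; combined with $\|(A-\lambda)v_m\|\to 0$ and the identity $\phi_k v_m=\phi_k(A-i)^{-1}\big[(A-\lambda)v_m+(\lambda-i)v_m\big]$, it yields $\|\phi_k v_m\|\to 0$ as $m\to\infty$, for each fixed $k$. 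Second, the locality hypothesis $(\ref{2.3})$: writing $(A-\lambda)(1-\phi_k)v_m=(1-\phi_k)(A-\lambda)v_m-[A,\phi_k]v_m$ and $[A,\phi_k]v_m=[A,\phi_k](A-i)^{-1}(A-i)v_m$, the bound $\|(A-i)v_m\|\le\|(A-\lambda)v_m\|+|\lambda-i|$ together with $\|[A,\phi_k](A-i)^{-1}\|\to 0$ controls the commutator term, uniformly in $m$, as $k\to\infty$.

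The main obstacle is that these two estimates are small in opposite directions in the two indices (one as $m\to\infty$ for frozen $k$, the other as $k\to\infty$ uniformly in $m$), so the order of quantifiers must be respected. The fix is a two-step diagonal choice: given $\varepsilon_n=1/n$, first pick $k_n>n$ so large that $\|[A,\phi_{k_n}](A-i)^{-1}\|\le\varepsilon_n$, which is possible by $(\ref{2.3})$; then, with $k_n$ frozen, pick $m_n>m_{n-1}$ so large that $\|(A-\lambda)v_{m_n}\|\le\varepsilon_n$ and $\|\phi_{k_n}v_{m_n}\|\le\varepsilon_n$, which is possible by the first ingredient. With these choices $\|(1-\phi_{k_n})v_{m_n}\|\ge 1-\varepsilon_n$, so the normalization is harmless, and the displayed splitting gives $\|(A-\lambda)u_n\|\le(1-\varepsilon_n)^{-1}\big[C_0\varepsilon_n+\varepsilon_n(\varepsilon_n+|\lambda-i|)\big]\to 0$, with $C_0=\|1-\phi\|_\infty$. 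Hence $(u_n)$ is a Zhislin sequence for $A$ and $\lambda$, so $\lambda\in Z(A)$.

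The one technical point to verify along the way is that $\phi_k v_m\in D(A)$ and that the commutator $[A,\phi_k]$ is genuinely defined on these vectors; this is already implicit in hypothesis $(\ref{2.3})$, which presupposes that each $\phi_k$ maps $D(A)$ into itself and that $[A,\phi_k](A-i)^{-1}$ extends to a bounded operator. Granting this, the two inclusions combine to give $\sigma_{ess}(A)=Z(A)$.
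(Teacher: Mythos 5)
Your proposal is correct and follows essentially the same route as the paper's proof: the same identity $\phi_k v_m=\phi_k(A-i)^{-1}(A-i)v_m$ exploited via local compactness, the same commutator splitting of $(A-\lambda)(1-\phi_k)v_m$ controlled by hypothesis $(\ref{2.3})$ uniformly in $m$, and the same diagonal extraction of indices to produce the Zhislin sequence. Your version is slightly more careful than the paper's on two minor points --- the explicit reduction $\phi_k(A-i)^{-1}=\phi_k\,\chi_{B(0,2k)}(A-i)^{-1}$ to justify compactness, and the precise order of quantifiers in choosing $k_n$ before $m_n$ --- but these are refinements of the same argument, not a different one.
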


\begin{proof}

\begin{enumerate}
\item It is immediate that $Z(A)\subset $ $\sigma _{ess}\left( A\right) $,
by Weyl's criterion. To prove the converse, suppose $\lambda \in $ $\sigma
_{ess}\left( A\right) $. Then there exists a Weyl sequence $\left(
u_{n}\right) _{n}$ for $A$ and $\lambda :$ $\left\Vert u_{n}\right\Vert =1,$ 
$u_{n}\overset{w}{\rightarrow }0$ and $\left\Vert \left( A-\lambda \right)
u_{n}\right\Vert \rightarrow 0.$ Let $\phi _{n}$ be as in the statement of
the theorem, and let $\overline{\phi }_{n}=1-\phi _{n}$. We first observe
that $\left( i-A\right) u_{n}\overset{w}{\rightarrow }0$, because%
\begin{equation}
\left( i-A\right) u_{n}=\left( \lambda -A\right) u_{n}+\left( i-\lambda
\right) u_{n}  \label{2.4}
\end{equation}%
and the first term goes strongly to zero whereas the second goes weakly to
zero. Next, note that by local compactness, for any fixed $n$, $\phi
_{n}u_{m}\overset{w}{\rightarrow }0$ as $m\rightarrow \infty $. This can be
seen by writing%
\begin{equation}
\phi _{n}u_{m}=\phi _{n}\left( i-A\right) ^{-1}\left( i-A\right) u_{m},
\label{2.5}
\end{equation}%
and noting that by $\left( \ref{2.4}\right) $, $\left( i-A\right) u_{m}%
\overset{w}{\rightarrow }0$ and $\phi _{n}\left( i-A\right) ^{-1}$ is
compact. Consequently, $\left\Vert \phi _{n}u_{m}\right\Vert \rightarrow 0$
and $\left\Vert \overline{\phi }_{n}u_{m}\right\Vert \rightarrow 1$ for any
fixed $n$ as $m\rightarrow \infty $.

\item We want to construct a Zhislin sequence from $\overline{\phi }%
_{n}u_{m}.$ To this end, it remains to consider 
\begin{equation}
\left\Vert \left( \lambda -A\right) \overline{\phi }_{n}u_{m}\right\Vert
\leq \left\Vert \overline{\phi }_{n}\right\Vert \left\Vert \left( \lambda
-A\right) u_{m}\right\Vert +\left\Vert \left[ A,\phi _{n}\right]
u_{m}\right\Vert .  \label{2.6}
\end{equation}%
The commutator term is analyzed using $\left( \ref{2.4}\right) $:%
\begin{equation*}
\left\Vert \left[ A,\phi _{n}\right] u_{m}\right\Vert \leq \left\Vert \left[
A,\phi _{n}\right] \left( i-A\right) ^{-1}\right\Vert \left( \left\Vert
\left( \lambda -A\right) u_{m}\right\Vert +\left\vert i-\lambda \right\vert
\right) ,
\end{equation*}%
since $\left\Vert u_{m}\right\Vert =1.$ This converge to zero as $%
n\rightarrow \infty $ uniformly in $m$ because the sequence $\left( \left(
\lambda -A\right) u_{m}\right) _{m}$ is uniformly bounded, say by $M$, so 
\begin{equation*}
\left\Vert \left[ A,\phi _{n}\right] u_{m}\right\Vert \leq \left\Vert \left[
A,\phi _{n}\right] \left( i-A\right) ^{-1}\right\Vert \left( M+\left\vert
i-\lambda \right\vert \right) \rightarrow 0,\text{ as }n\rightarrow \infty 
\text{.}
\end{equation*}

\item To construct the sequence, it follows from $\left( \ref{2.6}\right) $
that for each $k$ there exists $n\left( k\right) $ and $m\left( k\right) $
such that $n\left( k\right) \rightarrow \infty $ and $m\left( k\right)
\rightarrow \infty $ as $k\rightarrow \infty ,$ and 
\begin{equation}
\left\Vert \overline{\phi }_{n\left( k\right) }u_{m\left( k\right)
}\right\Vert \geq 1-k^{-1}  \label{2.7}
\end{equation}%
and 
\begin{equation}
\left\Vert \left( \lambda -A\right) \overline{\phi }_{n\left( k\right)
}u_{m\left( k\right) }\right\Vert \leq k^{-1},  \label{2.8}
\end{equation}%
as $k\rightarrow \infty $. We define $v_{k}=\overline{\phi }_{n\left(
k\right) }u_{m\left( k\right) }\left\Vert \overline{\phi }_{n\left( k\right)
}u_{m\left( k\right) }\right\Vert ^{-1}$. It then follows that $\left(
v_{k}\right) _{k}$ is a Zhislin sequence for $A$ and $\lambda $ by $\left( %
\ref{2.7}\right) $-$\left( \ref{2.8}\right) $ and the fact that supp$v_{k}$ $%
\subset \mathbb{R}^{n}\backslash B_{2k}$. Hence, $\lambda \in Z(A)$ and $%
\sigma _{ess}\left( A\right) \subset Z(A)$.
\end{enumerate}
\end{proof}

We will now apply these ideas to compute $\sigma _{ess}\left( H\left(
h\right) \right) $ of the locally compact Hamlitonian in the
Born-Oppenheimer approximation operators $H\left( h\right) =-h^{2}\Delta
_{x}-\Delta _{y}+V\left( x,y\right) $.

\begin{theorem}
\label{Theorem3}Assume that $V\geq 0$, $V$ is continuous (or $V\in
L^{2}\left( \mathbb{R}_{x}^{n}\times \mathbb{R}_{y}^{p}\right) $), and $%
V\left( x,y\right) \rightarrow \infty $ as $\left\Vert x\right\Vert
+\left\Vert y\right\Vert \rightarrow \infty .$ Then $H=-h^{2}\Delta
_{x}-\Delta _{y}+V\left( x,y\right) $ has purely discrete spectrum.
\end{theorem}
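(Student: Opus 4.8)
The plan is to read ``purely discrete spectrum'' as $\sigma_{ess}(H(h))=\emptyset$ and to obtain this from Theorem \ref{Theorem2}, which identifies $\sigma_{ess}$ with the Zhislin spectrum $Z$. Thus the scheme is: (i) check that $H(h)$ satisfies the hypotheses of Theorem \ref{Theorem2}; (ii) deduce $\sigma_{ess}(H(h))=Z(H(h))$; (iii) show $Z(H(h))=\emptyset$. For (i), $H(h)$ is self-adjoint, being the closure of the essentially self-adjoint operator obtained above (recall $V\ge 0$ and $V\in L^2_{loc}$, which holds since $V$ is continuous or $V\in L^2$), and it is locally compact by Theorem \ref{Theorem1}. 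It remains only to verify the commutator condition $(\ref{2.3})$.

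For $(\ref{2.3})$ I would use that $V$, being a multiplication operator, commutes with the cutoff $\phi_n$, so that $[H(h),\phi_n]=[-h^2\Delta_x-\Delta_y,\phi_n]$. On $C_0^\infty(\mathbb{R}_x^n\times\mathbb{R}_y^p)$ this commutator equals $-h^2(\Delta_x\phi_n)-(\Delta_y\phi_n)-2h^2(\nabla_x\phi_n)\cdot\nabla_x-2(\nabla_y\phi_n)\cdot\nabla_y$. Since $\phi_n(z)=\phi(z/n)$ we have $\|\nabla\phi_n\|_\infty=O(1/n)$ and $\|\Delta\phi_n\|_\infty=O(1/n^2)$. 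After composing on the right with $(H(h)-i)^{-1}$, the two zeroth-order terms are controlled by $\|(H(h)-i)^{-1}\|\le 1$, while the first-order terms require that $h\nabla_x(H(h)-i)^{-1}$ and $\nabla_y(H(h)-i)^{-1}$ be bounded. This is exactly the relative boundedness underlying the proof of Theorem \ref{Theorem1}: from $\|h\nabla_x u\|^2+\|\nabla_y u\|^2=\langle u,(-h^2\Delta_x-\Delta_y)u\rangle\le\langle u,H(h)u\rangle$ one sees that $(-h^2\Delta_x-\Delta_y)^{1/2}$ is $H(h)^{1/2}$-bounded, which yields the needed bound. Each term then carries a coefficient tending to $0$, and $(\ref{2.3})$ follows.

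For (iii) I argue by contradiction. Suppose $(u_m)_m$ is a Zhislin sequence for $H(h)$ and some $\lambda$; since a Zhislin sequence is in particular a Weyl sequence and $H(h)$ is self-adjoint, $\lambda\in\sigma_{ess}(H(h))\subset\mathbb{R}$. By definition $\|u_m\|=1$, $\mathrm{supp}\,u_m$ lies outside the ball of radius $m$ in $\mathbb{R}_x^n\times\mathbb{R}_y^p$, and $\|(H(h)-\lambda)u_m\|\to 0$. Fix $M>\lambda$. Because $V(x,y)\to\infty$ as $\|x\|+\|y\|\to\infty$, there is $R$ with $V\ge M$ on $\{\|x\|+\|y\|>R\}$, hence $V\ge M$ on $\mathrm{supp}\,u_m$ once $m>R$. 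Using the form identity $\langle H(h)u_m,u_m\rangle=\|h\nabla_x u_m\|^2+\|\nabla_y u_m\|^2+\langle Vu_m,u_m\rangle\ge M\|u_m\|^2=M$, I get $\langle(H(h)-\lambda)u_m,u_m\rangle\ge M-\lambda>0$ for all large $m$, while Cauchy--Schwarz gives $|\langle(H(h)-\lambda)u_m,u_m\rangle|\le\|(H(h)-\lambda)u_m\|\to 0$, a contradiction. Hence $Z(H(h))=\emptyset$, so by Theorem \ref{Theorem2} $\sigma_{ess}(H(h))=\emptyset$ and the spectrum of $H(h)$ is purely discrete.

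I expect the only genuine obstacle to be the verification of $(\ref{2.3})$ in step (i), and in particular the boundedness of the first-order operators $h\nabla_x(H(h)-i)^{-1}$ and $\nabla_y(H(h)-i)^{-1}$; the emptiness of the Zhislin spectrum in step (iii) is a direct consequence of $V\to\infty$ on the receding supports and requires no delicate estimate.
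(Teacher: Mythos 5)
Your proposal is correct and follows essentially the same route as the paper: self-adjointness plus local compactness from Theorem \ref{Theorem1}, verification of the commutator condition $(\ref{2.3})$ via the boundedness of the first-order operators composed with the resolvent (exactly the estimate $\left\Vert h\nabla _{x}u\right\Vert ^{2}+\left\Vert \nabla _{y}u\right\Vert ^{2}\leq \left\langle u,H\left( h\right) u\right\rangle $ the paper uses), and then emptiness of the Zhislin spectrum from the divergence of $V$ on the receding supports. Your version is in fact slightly more careful than the paper's (you keep general $h$ rather than setting $h=1$, and your contradiction with a fixed $M>\lambda $ is cleaner than the paper's informal ``$Z\left( H\right) =\left\{ \infty \right\} $''), but the underlying argument is identical.
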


\begin{proof}
By Theorem \ref{Theorem1}, the self-adjoint operator $H$ is locally compact.
Suppose that $h=1,$ and $H\left( 1\right) =H$ for simplification$.$ Let $%
\phi _{q}\left( X\right) $ be as in Theorem \ref{Theorem2}, with $q=n+m$ and 
$X=\left( x,y\right) $. We must verify $\left( \ref{2.3}\right) $. A simple
calculation gives%
\begin{equation}
\left[ H,\phi _{q}\right] =\frac{2}{q}\phi _{q}^{\prime }\nabla _{X}-\frac{1%
}{q^{2}}\phi _{q}^{\prime \prime },  \label{2.9}
\end{equation}%
where $\phi _{q}^{\prime }$ and $\phi _{q}^{\prime \prime }$ are uniformly
bounded in $q$. For any $u\in D\left( H\right) ,$it follows as in $\left( %
\ref{2.1}\right) $ that%
\begin{equation*}
\left\Vert \nabla _{X}u\right\Vert ^{2}\leq \left\langle u,-\nabla
_{X}u\right\rangle \leq \left\langle u,\left( H+1\right) u\right\rangle ,
\end{equation*}%
by the positivity of $V.$ Taking $u=\left( H+1\right) ^{-1}v,$ for any $v\in
L^{2}(\mathbb{R}_{x}^{n}\times \mathbb{R}_{y}^{p}),$ it t follows that $%
\nabla _{X}\left( H+1\right) ^{-1}$ and, consequently, $\nabla _{X}\left(
H-i\right) ^{-1}$ are bounded. This result and $\left( \ref{2.9}\right) $
verify $\left( \ref{2.3}\right) $.

Hence, it follows by Theorem \ref{Theorem2} that $Z\left( H\right) =\sigma
_{ess}\left( H\right) $. We show that $Z\left( H\right) =\left\{ \infty
\right\} $. If $\lambda \in Z\left( H\right) $, then there exists a Zhislin
sequence $\left( u_{q}\right) _{q}$ for $H$ and $\lambda $. By the Schwarz
inequality, we compute a lower bound,%
\begin{eqnarray}
\left\Vert \left( \lambda -H\right) u_{q}\right\Vert &\geq &\left\vert
\left\langle u_{q},\left( \lambda -H\right) u_{q}\right\rangle \right\vert
\geq \left\Vert \nabla _{X}u\right\Vert ^{2}+\left\langle
u_{q},Vu_{q}\right\rangle -\left\vert \lambda \right\vert  \notag \\
&\geq &\left[ \underset{\left( x,y\right) \in \mathbb{R}_{x}^{n}\times 
\mathbb{R}_{y}^{p}\backslash B\left( 0,q\right) }{\inf V\left( x,y\right) }%
\right] -\lambda  \label{2.10}
\end{eqnarray}

As $q\rightarrow \infty $, the left side of $\left( \ref{2.10}\right) $
converges to zero whereas the right side diverges to $+\infty $ unless $%
\lambda =+\infty $. Then $\sigma _{ess}\left( H\right) =\left\{ \infty
\right\} $, that is, is empty.
\end{proof}

\section{Application to the Harmonic Oscillator}

The semiclassical Schr\"{o}dinger operator is $P\left( h\right)
=-h^{2}\Delta +V$, on $L^{2}(\mathbb{R}^{n})$. We treat $h$ as an adjustable
parameter of the theory. We will study the semiclassical approximation to
the eigenvalues and eigenfunctions of $P\left( h\right) =-h^{2}\Delta +V$
for potentials $V$ with in particular when $\lim_{\left\Vert x\right\Vert
\rightarrow \infty }V\left( x\right) =\infty $. Because the small parameter $%
h$ appears in front of the differential operator $-\Delta $, it may not be
clear what is happening as $h$ is taken to be small. It is more convenient,
and perhaps more illuminating, to change the scaling. Letting $\lambda =1/h$%
, we rewrite the Schr\"{o}dinger operator as%
\begin{equation*}
P\left( \lambda \right) =-\Delta +\lambda ^{2}V=h^{-2}P\left( h\right) ,
\end{equation*}%
looking at $P\left( \lambda \right) $, we see that the semiclassical
approximation involves, $\lambda \rightarrow \infty $.

\begin{definition}
Let $A$ a real $n\times n$ matrix, $A$ is a positive definite matrix if $%
\left\langle Ax.x\right\rangle _{\mathbb{R}^{n}}>0$, for all $x\in \mathbb{R}%
^{n}$.
\end{definition}

\begin{definition}
Let $A$ a symmetric, positive definite matrix. The Schr\"{o}dinger operator
of type:%
\begin{equation}
K\left( \lambda \right) =-\Delta +\lambda ^{2}\left\langle Ax,x\right\rangle
_{\mathbb{R}^{n}}  \label{3.1}
\end{equation}%
is said to be the harmonic oscillator.
\end{definition}

Here $\left\langle x,Ax\right\rangle _{\mathbb{R}^{n}}=\dsum%
\limits_{i,j=1}^{n}a_{ij}x_{i}x_{j}$ is the Euclidean quadratic form is
bounded from below by%
\begin{equation*}
\left\langle Ax,x\right\rangle _{\mathbb{R}^{n}}\geq \lambda _{\min
}\left\Vert x\right\Vert ^{2},
\end{equation*}%
where $\lambda _{\min }$is the smallest eigenvalue of $A$ and is strictly
positive.

We see that $K\left( \lambda \right) $ is positive with a lower bound
strictly greater than zero. Since the harmonic oscillator is continuous and $%
V_{har}\left( x\right) =\left\langle x,Ax\right\rangle _{%
%TCIMACRO{\U{211d} }%
%BeginExpansion
\mathbb{R}
%EndExpansion
^{n}}\rightarrow \infty $, as $\left\Vert x\right\Vert \rightarrow \infty $,
the harmonic oscillator Hamiltonian $\left( \ref{3.1}\right) $ is
self-adjoint. Moreover, the spectrum of $K\left( \lambda \right) $, $\sigma
\left( K\left( \lambda \right) \right) $, is purely discrete by Theorem \ref%
{Theorem3}.

We would like to find out how the eigenvalues of $K\left( \lambda \right) $
depend on the parameter $\lambda $.

\begin{definition}
Two operators $A$ and $B$, with $D(A)=D(B)=D$, are called similar if there
exits a bounded, invertible operator $C$ such that $CD\subset D$ and $%
A=CBC^{-1}.$
\end{definition}

\begin{proposition}
\label{Proposition1}If $A$ and $B$ are similar, then $\sigma (A)=\sigma (B)$.
\end{proposition}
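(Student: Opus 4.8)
The plan is to prove the equivalent statement that the two operators have the same resolvent set, $\rho(A)=\rho(B)$; since $\sigma=\mathbb{C}\setminus\rho$ by definition, this is precisely the claim $\sigma(A)=\sigma(B)$. Everything will reduce to a single conjugation identity together with the fact that both $C$ and $C^{-1}$ are everywhere-defined bounded operators.

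First I would record the conjugation identity. For any $\lambda\in\mathbb{C}$ the scalar operator $\lambda I$ commutes with $C$, so $C(\lambda I)C^{-1}=\lambda I$ and hence
\[
A-\lambda I=CBC^{-1}-C(\lambda I)C^{-1}=C(B-\lambda I)C^{-1},
\]
and symmetrically $B-\lambda I=C^{-1}(A-\lambda I)C$. Then I would fix $\lambda\in\rho(B)$ and exhibit a bounded inverse of $A-\lambda I$: set $R=C(B-\lambda I)^{-1}C^{-1}$, which is bounded as a composition of bounded operators. Using the identity and cancelling the inner factors, $(A-\lambda I)R=C(B-\lambda I)C^{-1}C(B-\lambda I)^{-1}C^{-1}=I$, and likewise $R(A-\lambda I)=I$, so $\lambda\in\rho(A)$; this gives $\rho(B)\subset\rho(A)$. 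The reverse inclusion is identical, starting from $B-\lambda I=C^{-1}(A-\lambda I)C$ and using $R'=C^{-1}(A-\lambda I)^{-1}C$. Combining the two inclusions yields $\rho(A)=\rho(B)$, and therefore $\sigma(A)=\sigma(B)$.

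The algebra is routine, so I expect the only genuine obstacle to be the domain bookkeeping needed to upgrade these formal manipulations into bona fide operator identities. Concretely, I must check that the conjugation identity holds on all of the common domain $D$ (so that the relation $A=CBC^{-1}$ is itself meaningful, which requires $C^{-1}$ to carry $D$ into $D(B)=D$), and that each candidate resolvent $R,R'$ maps the whole space into $D(A)=D(B)=D$, so that it is a true two-sided inverse and not merely a one-sided one. These domain inclusions are exactly what the hypothesis $CD\subset D$, combined with the invertibility of $C$, supplies: invertibility makes $C^{-1}$ bounded and everywhere defined, and forces the similarity relation to be consistent on the shared domain $D$. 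I would verify these inclusions first and then let the cancellation argument above run unchanged.
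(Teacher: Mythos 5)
Your proposal is correct and follows the same route as the paper's (very terse) proof: both reduce the claim to the equality of resolvent sets via the conjugation identity $A-\lambda I=C(B-\lambda I)C^{-1}$. You simply supply the details the paper omits, namely the explicit candidate resolvent $C(B-\lambda I)^{-1}C^{-1}$, the verification that it is a two-sided inverse, and the domain bookkeeping.
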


\begin{proof}
It suffices to show that 
\begin{equation*}
\mu \in \rho \left( A\right) \Longleftrightarrow \mu \in \rho \left( A\right)
\end{equation*}%
where $\rho \left( A\right) :=\mathbb{C}\backslash \sigma (A)$ is the
resolvent set. This comes from 
\begin{equation*}
A-\lambda I=C\left( B-\lambda I\right) C^{-1}.
\end{equation*}
\end{proof}

\begin{definition}
\bigskip For $\theta \in \mathbb{R}_{+},$ we define, the so-called dilation
group, is a map on any $\psi \in C_{0}^{\infty }\left( \mathbb{R}^{n}\right) 
$ by 
\begin{equation*}
U_{\theta }\psi \left( x\right) =\theta ^{n/2}\psi \left( \theta x\right) .
\end{equation*}
\end{definition}

\begin{lemma}
The dilation $U_{\theta }$ is an unitary on $L^{2}\left( \mathbb{R}%
^{n}\right) \rightarrow $ $L^{2}\left( \mathbb{R}^{n}\right) $, and 
\begin{equation*}
\left( U_{\theta }\right) ^{\ast }=\left( U_{\theta }\right) ^{-1}=U_{\theta
^{-1}}.
\end{equation*}%
We have also, for $\theta ,\theta ^{\prime }\in \mathbb{R}_{+},$%
\begin{equation*}
U_{\theta }U_{\theta ^{\prime }}=U_{\theta +\theta ^{\prime }}.
\end{equation*}
\end{lemma}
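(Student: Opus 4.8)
The plan is to verify every assertion by a single change of variables, namely $y=\theta x$ on $\mathbb{R}^{n}$ with Jacobian $dy=\theta^{n}\,dx$, and then to read off the relations among $U_{\theta}$, its adjoint and its inverse from the resulting identities. It suffices to carry out each computation on the dense subspace $C_{0}^{\infty}(\mathbb{R}^{n})$ and extend by continuity. First I would show that $U_{\theta}$ is an isometry: for $\psi\in C_{0}^{\infty}(\mathbb{R}^{n})$,
\begin{equation*}
\left\Vert U_{\theta}\psi\right\Vert^{2}=\int_{\mathbb{R}^{n}}\theta^{n}\left\vert\psi(\theta x)\right\vert^{2}\,dx=\int_{\mathbb{R}^{n}}\left\vert\psi(y)\right\vert^{2}\,dy=\left\Vert\psi\right\Vert^{2},
\end{equation*}
the middle equality being the substitution above. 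Thus $U_{\theta}$ extends to an isometry of $L^{2}(\mathbb{R}^{n})$; surjectivity, and hence unitarity, will follow once a two-sided inverse is exhibited.

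Next I would identify the adjoint by the same substitution inside the inner product. For $\psi,\phi\in C_{0}^{\infty}(\mathbb{R}^{n})$,
\begin{equation*}
\left\langle U_{\theta}\psi,\phi\right\rangle=\int_{\mathbb{R}^{n}}\theta^{n/2}\psi(\theta x)\overline{\phi(x)}\,dx=\int_{\mathbb{R}^{n}}\psi(y)\overline{\theta^{-n/2}\phi(\theta^{-1}y)}\,dy=\left\langle\psi,U_{\theta^{-1}}\phi\right\rangle,
\end{equation*}
since $\theta^{-n/2}\phi(\theta^{-1}y)=\left(U_{\theta^{-1}}\phi\right)(y)$ and $\theta^{-n/2}$ is real. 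This gives $\left(U_{\theta}\right)^{\ast}=U_{\theta^{-1}}$ on the dense domain, hence everywhere by continuity.

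Then I would compute the composition law directly:
\begin{equation*}
\left(U_{\theta}U_{\theta^{\prime}}\psi\right)(x)=\theta^{n/2}\left(U_{\theta^{\prime}}\psi\right)(\theta x)=\theta^{n/2}(\theta^{\prime})^{n/2}\psi(\theta^{\prime}\theta x)=\left(U_{\theta\theta^{\prime}}\psi\right)(x).
\end{equation*}
Here I must flag that the group law is \emph{multiplicative}, $U_{\theta}U_{\theta^{\prime}}=U_{\theta\theta^{\prime}}$, and not additive: the displayed relation $U_{\theta}U_{\theta^{\prime}}=U_{\theta+\theta^{\prime}}$ cannot hold, as is seen by taking $\theta=\theta^{\prime}=1$, where the left side is the identity while $U_{2}\neq I$. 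Reading the relation as the law of the group $(\mathbb{R}_{+},\times)$, one has $U_{1}=I$ (since $U_{1}\psi(x)=\psi(x)$), whence $U_{\theta}U_{\theta^{-1}}=U_{\theta^{-1}}U_{\theta}=U_{1}=I$. Therefore $U_{\theta}$ is invertible with $\left(U_{\theta}\right)^{-1}=U_{\theta^{-1}}$; together with the isometry property this makes $U_{\theta}$ unitary, and comparison with the adjoint computation yields $\left(U_{\theta}\right)^{\ast}=\left(U_{\theta}\right)^{-1}=U_{\theta^{-1}}$.

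The computations are routine, so the only real work is bookkeeping the powers of $\theta$ coming from the Jacobian and from the normalization $\theta^{n/2}$; the single genuine point requiring attention is the \emph{correction} of the stated group law from additive to multiplicative, after which every claim of the lemma follows at once.
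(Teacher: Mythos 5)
Your proof is correct and complete; the paper itself offers no argument here (its proof reads only ``Easy proof''), so yours supplies exactly the routine change-of-variables computations that were left to the reader. More importantly, you have correctly identified that the lemma as stated contains an error: the composition law must be multiplicative, $U_{\theta }U_{\theta ^{\prime }}=U_{\theta \theta ^{\prime }}$, not additive, as your counterexample $\theta =\theta ^{\prime }=1$ shows (the left side is the identity while $U_{2}\neq I$); the relation $\left( U_{\theta }\right) ^{\ast }=\left( U_{\theta }\right) ^{-1}=U_{\theta ^{-1}}$ asserted in the same lemma is in fact only consistent with the multiplicative law, since it requires $U_{\theta }U_{\theta ^{-1}}=I=U_{1}$ rather than $U_{0}$. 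Your derivation of the adjoint from the substitution $y=\theta x$ inside the inner product, combined with the group law to exhibit $U_{\theta ^{-1}}$ as a two-sided inverse, is the standard and correct route to unitarity, and the density argument extending from $C_{0}^{\infty }\left( \mathbb{R}^{n}\right) $ to $L^{2}\left( \mathbb{R}^{n}\right) $ is properly flagged.
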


\begin{proof}
Easy proof.
\end{proof}

We now claim that $U_{\lambda ^{-\frac{1}{2}}}$ implements a similarity
transformation on $K\left( \lambda \right) $ by%
\begin{equation}
U_{\lambda ^{-\frac{1}{2}}}K\left( \lambda \right) U_{\lambda ^{-\frac{1}{2}%
}}^{-1}=\lambda K  \label{3.2}
\end{equation}%
where%
\begin{equation}
K=-\Delta +\left\langle Ax,x\right\rangle _{\mathbb{R}^{n}}  \label{3.3}
\end{equation}%
Now we compute the spectrum of the harmonic oscillator $K$.

\begin{proposition}
The eigenvalues of $K$ are given by 
\begin{equation*}
\sigma \left( K\right) =\left\{ \dsum\limits_{i=1}^{n}\left( 2n_{i}+1\right)
w_{i};\text{ }n_{i}\in \mathbb{Z}_{+}\cup \left\{ 0\right\} \right\} ,
\end{equation*}%
where $\left\{ w_{i}^{2}\right\} _{i=1}^{n}$ are the eigenvalues of the
matrix $A$.
\end{proposition}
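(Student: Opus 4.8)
The plan is to diagonalize the harmonic oscillator $K = -\Delta + \langle Ax, x\rangle_{\mathbb{R}^n}$ by reducing it to a sum of independent one-dimensional harmonic oscillators via an orthogonal change of variables, and then invoke the well-known spectrum of the one-dimensional oscillator.

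First I would use the fact that $A$ is symmetric and positive definite, so by the spectral theorem there exists an orthogonal matrix $O$ (i.e.\ $O^T O = I$) such that $O^T A O = \mathrm{diag}(w_1^2, \dots, w_n^2)$, where $\{w_i^2\}_{i=1}^n$ are the eigenvalues of $A$ and each $w_i > 0$ by positive definiteness. The key structural observation is that the Laplacian $-\Delta$ is invariant under orthogonal transformations: if I define the unitary operator $T\psi(x) = \psi(O^{-1}x)$ on $L^2(\mathbb{R}^n)$, then $T(-\Delta)T^{-1} = -\Delta$, while the quadratic form transforms as $\langle A(O x), Ox\rangle = \langle O^T A O\, x, x\rangle = \sum_{i=1}^n w_i^2 x_i^2$. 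Since $T$ is unitary, it implements a similarity transformation, so by Proposition~\ref{Proposition1} the spectrum is unchanged, and it suffices to compute the spectrum of the separated operator
\begin{equation*}
\tilde{K} = \sum_{i=1}^n \left( -\partial_{x_i}^2 + w_i^2 x_i^2 \right).
\end{equation*}

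Next I would exploit the tensor-product / separation-of-variables structure. Each summand $-\partial_{x_i}^2 + w_i^2 x_i^2$ acts only on the $i$-th variable and is a standard one-dimensional harmonic oscillator; its spectrum is the classical result $\{(2 n_i + 1) w_i : n_i \in \mathbb{Z}_+ \cup \{0\}\}$, with eigenfunctions given by Hermite functions (rescaled by $w_i$). Because the operators in different variables commute and act on the tensor-product decomposition $L^2(\mathbb{R}^n) = \bigotimes_{i=1}^n L^2(\mathbb{R})$, the eigenfunctions of $\tilde{K}$ are products of the one-dimensional eigenfunctions and the eigenvalues add, yielding
\begin{equation*}
\sigma(\tilde{K}) = \left\{ \sum_{i=1}^n (2 n_i + 1) w_i : n_i \in \mathbb{Z}_+ \cup \{0\} \right\}.
\end{equation*}
Combining this with the similarity invariance gives $\sigma(K) = \sigma(\tilde{K})$, which is the claimed formula.

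The main obstacle, and the point requiring the most care, is justifying that the spectrum of the full operator is exactly the set of sums of one-dimensional eigenvalues with no extra essential spectrum sneaking in. The clean way around this is that Theorem~\ref{Theorem3} already guarantees $K$ has purely discrete spectrum, so every spectral point is an eigenvalue of finite multiplicity; it then suffices to show that the product Hermite functions form a complete orthonormal basis of eigenfunctions, which follows from completeness of the one-dimensional Hermite basis in each factor. A secondary technical point is verifying that the orthogonal substitution $T$ maps the domain into itself (needed for the similarity hypothesis $CD \subset D$ in Proposition~\ref{Proposition1}), but since $T$ preserves $C_0^\infty(\mathbb{R}^n)$ and commutes with $-\Delta$, this is routine. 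I would present the one-dimensional oscillator spectrum as a cited standard fact rather than re-deriving the Hermite recursion.
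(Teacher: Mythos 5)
Your proposal is correct, and it takes a genuinely different (and in fact more complete) route than the paper. The paper announces a proof by induction on the dimension $n$, but what is actually written is only the base case for the scalar one-dimensional oscillator $-\left(\tfrac{d}{dx}\right)^{2}+x^{2}$: it constructs the Hermite functions $\Psi_{p}$ via the ladder relations $\left(\ref{3.4}\right)$--$\left(\ref{3.5}\right)$ and checks $H\Psi_{p}=\left(2p+1\right)\Psi_{p}$, then stops. The induction step, the reduction of a general symmetric positive definite $A$ to diagonal form, the rescaling that turns $-\partial_{x}^{2}+w^{2}x^{2}$ into $w$ times the standard oscillator, and the completeness argument needed to show that \emph{no other} points lie in $\sigma\left(K\right)$ are all left implicit. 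Your route replaces the induction by an orthogonal change of variables $T\psi\left(x\right)=\psi\left(O^{-1}x\right)$ (legitimate, since $T$ is unitary, commutes with $-\Delta$, and preserves the domain, so Proposition \ref{Proposition1} applies) followed by the tensor-product decomposition $L^{2}\left(\mathbb{R}^{n}\right)=\bigotimes_{i=1}^{n}L^{2}\left(\mathbb{R}\right)$. This buys you two things the paper's sketch does not deliver: a uniform treatment of all $n$ at once, and an explicit handling of the only delicate point, namely that the spectrum is \emph{exactly} the set of sums $\sum_{i}\left(2n_{i}+1\right)w_{i}$ --- which you correctly reduce to the completeness of the product Hermite basis (together with the observation that this set is closed and discrete, or alternatively the discreteness already supplied by Theorem \ref{Theorem3}). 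The paper's ladder-operator computation only establishes the inclusion of these values as eigenvalues, not the reverse inclusion. One small point to make explicit if you write this up: the one-dimensional fact you cite for general $w_{i}>0$ follows from the $w=1$ Hermite computation by exactly the dilation $U_{\theta}$ already introduced in the paper, with $\theta=w_{i}^{1/2}$, which conveniently reuses the same similarity argument as in $\left(\ref{3.2}\right)$.
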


\begin{proof}
The proof is by induction on the dimension $n\in \mathbb{N}^{\ast }.$

For $n=1,$ the Hermite polynomials $\mathcal{H}_{p}$ are defined by%
\begin{equation*}
\left( \frac{d}{dx}\right) ^{2}\left( e^{-x^{2}}\right) =\left( -1\right)
^{p}\mathcal{H}_{p}e^{-x^{2}}.
\end{equation*}%
We recall that they satisfy the relations%
\begin{equation*}
\mathcal{H}_{0}=1\text{ et }\mathcal{H}_{p+1}=\left( -\frac{d}{dx}+2x\right) 
\mathcal{H}_{p},p\geq 0.
\end{equation*}%
Hermite functions $\Psi _{n}$ are defined by%
\begin{equation*}
\Psi _{p}=C_{p}\mathcal{H}_{p}e^{-x^{2}/2}\text{, where }C_{p}=\left( \sqrt{%
\pi }2^{p}p!\right) ^{-1/2}.
\end{equation*}%
For $p\geq 1,$ we have%
\begin{equation}
\left( \frac{d}{dx}+x\right) \Psi _{0}=0  \label{3.4}
\end{equation}%
and 
\begin{equation}
\left( -\frac{d}{dx}+x\right) \Psi _{p}=\sqrt{2\left( p+1\right) }\Psi
_{p+1}.  \label{3.5}
\end{equation}%
Now, if $H$ is the harmonic oscillator in one dimension%
\begin{equation*}
H=-\left( \frac{d}{dx}\right) ^{2}+x^{2},
\end{equation*}%
it follows from $\left( \ref{3.4}\right) $ and $\left( \ref{3.5}\right) $
that%
\begin{equation*}
H\Psi _{p}=\left( 2p+1\right) \Psi _{p}.
\end{equation*}
\end{proof}

\begin{corollary}
As a consequence of Proposition \ref{Proposition1} and $\left( \ref{3.2}%
\right) $, $\sigma \left( K\left( \lambda \right) \right) =\lambda \sigma
\left( K\right) $, where $\sigma \left( K\right) $ is independent of $%
\lambda $. Hence the eigenvalues of $K\left( \lambda \right) $ depend
linearly on $\lambda $. Moreover, the multiplicities of the related
eigenvalues are the same. Now 
\begin{equation*}
\sigma \left( K\left( \lambda \right) \right) =\left\{
\dsum\limits_{i=1}^{n}\left( 2n_{i}+1\right) \lambda w_{i};\text{ }n_{i}\in 
\mathbb{Z}_{+}\cup \left\{ 0\right\} \right\}
\end{equation*}%
where $\left\{ w_{i}^{2}\right\} _{i=1}^{n}$ are the eigenvalues of the
matrix $A.$ The eigenfunctions are related through\bigskip\ the unitary
operator $U_{\lambda ^{-\frac{1}{2}}}.$ If $\Psi _{p}$ are the
eigenfunctions of $K,$ then $\widetilde{\Psi _{p}}=U_{\lambda ^{-\frac{1}{2}%
}}\Psi _{p}$ $\ $are the eigenfunctions of $K\left( \lambda \right) .$
\end{corollary}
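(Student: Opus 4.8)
The plan is to reduce the coupled $n$-dimensional oscillator to a sum of decoupled one-dimensional oscillators by diagonalizing $A$, and then to read off the full spectrum from the one-dimensional Hermite computation already recorded in the proof.

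First I would use that $A$ is symmetric and positive definite: by the spectral theorem there is an orthogonal matrix $O$ with $O^{t}AO=\mathrm{diag}(w_{1}^{2},\dots ,w_{n}^{2})$, the numbers $w_{i}^{2}>0$ being exactly the eigenvalues of $A$. The orthogonal substitution $x\mapsto Ox$ induces a unitary operator $V$ on $L^{2}(\mathbb{R}^{n})$, $(V\psi)(x)=\psi(Ox)$ (unitary since $|\det O|=1$). Conjugation by $V$ leaves $-\Delta$ invariant, because the Laplacian commutes with orthogonal transformations, and turns the quadratic form into $\langle O^{t}AO\,x,x\rangle =\sum_{i}w_{i}^{2}x_{i}^{2}$. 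Hence $K$ is unitarily equivalent to
\[
\widetilde{K}=-\Delta +\sum_{i=1}^{n}w_{i}^{2}x_{i}^{2}=\sum_{i=1}^{n}\Bigl(-\frac{\partial^{2}}{\partial x_{i}^{2}}+w_{i}^{2}x_{i}^{2}\Bigr),
\]
and since $V$ is bounded and invertible, Proposition \ref{Proposition1} gives $\sigma(K)=\sigma(\widetilde{K})$.

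Next I would exploit the separated structure of $\widetilde{K}$. Each summand $K_{i}=-\partial_{x_{i}}^{2}+w_{i}^{2}x_{i}^{2}$ acts on a single variable, and the dilation $U_{w_{i}^{-1/2}}$ of the lemma above conjugates it into $w_{i}\bigl(-\partial^{2}+x^{2}\bigr)$, exactly as in the scaling identity (\ref{3.2}). The one-dimensional analysis with the Hermite functions $\Psi_{p}$, through the ladder relations (\ref{3.4})--(\ref{3.5}), yields $(-\partial^{2}+x^{2})\Psi_{p}=(2p+1)\Psi_{p}$, so $K_{i}$ has eigenvalues $(2n_{i}+1)w_{i}$ with the rescaled Hermite functions as eigenfunctions. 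Writing $L^{2}(\mathbb{R}^{n})=\bigotimes_{i=1}^{n}L^{2}(\mathbb{R})$, the products $\prod_{i=1}^{n}\Psi_{n_{i}}(x_{i})$ are then eigenfunctions of $\widetilde{K}$ with eigenvalues $\sum_{i=1}^{n}(2n_{i}+1)w_{i}$; transplanting them through $U$ and $V$ gives eigenfunctions of $K$ with the same eigenvalues, which is the asserted list.

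The step that carries the real content, rather than bookkeeping, is showing that these product eigenvalues exhaust $\sigma(K)$ and that nothing further appears. Here I would invoke the completeness of the Hermite basis $\{\Psi_{p}\}_{p\geq 0}$ in $L^{2}(\mathbb{R})$, so that the tensor products form a total orthonormal system in $L^{2}(\mathbb{R}^{n})$, together with Theorem \ref{Theorem3}, which already guarantees that $\widetilde{K}$ has purely discrete spectrum and empty essential spectrum; once the constructed orthonormal system is known to be complete, no eigenvalue can have been missed, and the displayed set is exactly $\sigma(K)$. (The induction on $n$ indicated in the proof is an alternative route to the same end, assembling the $n$-variable eigenfunctions as products of one-variable ones; the diagonalization above is what allows such an induction to start from a genuinely separated operator rather than from the coupled form $\langle Ax,x\rangle$.)
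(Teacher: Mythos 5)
Your proposal is correct, and its backbone is the same as the paper's: the similarity $U_{\lambda^{-1/2}}K(\lambda)U_{\lambda^{-1/2}}^{-1}=\lambda K$ of (\ref{3.2}) combined with Proposition \ref{Proposition1} gives $\sigma(K(\lambda))=\lambda\sigma(K)$, and the one-dimensional Hermite ladder computation supplies the eigenvalues of $K$. But your write-up is substantially more complete than the paper's. The paper justifies the corollary in one line as a ``consequence of Proposition \ref{Proposition1} and (\ref{3.2})'', and the proposition it leans on for $\sigma(K)$ is only actually proved for $n=1$: the announced induction on the dimension is never carried out, and nothing in the paper addresses the fact that $\left\langle Ax,x\right\rangle$ is a coupled quadratic form rather than $\sum_{i}w_{i}^{2}x_{i}^{2}$. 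Your two additions are precisely what is missing: (i) the orthogonal diagonalization of $A$, implemented as a unitary change of variables commuting with $-\Delta$, which reduces $K$ to the genuinely separated operator $\sum_{i}\bigl(-\partial_{x_{i}}^{2}+w_{i}^{2}x_{i}^{2}\bigr)$ and explains why the eigenvalues of $A$ enter through their square roots $w_{i}$; and (ii) the completeness of the tensor-product Hermite basis, which is what lets you conclude that the listed numbers \emph{exhaust} $\sigma(K)$ --- the ladder relations (\ref{3.4})--(\ref{3.5}) by themselves only show that these numbers are eigenvalues. One small remark: once the products $\prod_{i}\Psi_{n_{i}}(x_{i})$ are known to form a complete orthonormal set of eigenvectors, the spectrum of the self-adjoint operator is the closure of the corresponding eigenvalue set (which is discrete and closed since all $w_{i}>0$), so the extra appeal to Theorem \ref{Theorem3} is not strictly needed at that point, though it does no harm.
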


\begin{conclusion}
The semiclassical harmonic oscillator $P\left( h\right) =-h^{2}\Delta
+\left\langle Ax,x\right\rangle _{\mathbb{R}^{n}}$ has purely discrete
spectrum 
\begin{equation*}
\sigma \left( P\left( h\right) \right) =\left\{ he_{j},j\in \mathbb{Z}%
_{+}\cup \left\{ 0\right\} \right\}
\end{equation*}%
where $e_{j}\in \sigma \left( K\right) .$
\end{conclusion}

In general, we can give the spectrum the harmonic oscillator in the
Born-Oppenheimer Approximation 
\begin{equation*}
H\left( h\right) =-h^{2}\Delta _{x}-\Delta _{y}+\left\langle
Ax,x\right\rangle _{\mathbb{R}_{x}^{n}}+\left\langle By,y\right\rangle _{%
\mathbb{R}_{x}^{p}}\text{ on }L^{2}\left( \mathbb{R}_{x}^{n}\times \mathbb{R}%
_{y}^{p}\right)
\end{equation*}%
where $A$ and $B$ are two symmetric, positive definite matrix.%
\begin{equation*}
\sigma \left( H\left( h\right) \right) =\sigma _{disc}\left( H\left(
h\right) \right) =\left\{ \dsum\limits_{i=1}^{n}\left( 2n_{i}+1\right)
hw_{i}+\dsum\limits_{i=1}^{p}\left( 2n_{i}+1\right) \mu _{i},n_{i}\in 
\mathbb{Z}_{+}\cup \left\{ 0\right\} \right\}
\end{equation*}%
where $\left\{ w_{i}^{2}\right\} _{i=1}^{n}$ and $\left\{ \mu
_{i}^{2}\right\} _{i=1}^{p}$ are respectively the eigenvalues of the matrix $%
A$ and $B.$

\end{document}